\newtheorem{theorem}{\bf Theorem}[section]
\newtheorem{proposition}[theorem]{\bf Proposition}
\theoremstyle{definition}
\newtheorem{definition}{\bf Definition}
\newcommand \Z{{\mathbb Z}}
\newcommand \Q{{\mathbb Q}}
\newcommand \R{{\mathbb R}}
\newcommand \C{{\mathbb C}}
\DeclareMathOperator{\rank}{rank}
\newcommand \x{{\mathbf{x}}}
\newcommand{\arxiv}[1]{\href{http://arxiv.org/abs/#1}{{\tt arXiv:#1}}}
\begin{document}
\title{\bf{Computing symmetric determinantal representations}}

\author{Justin Chen}
\address{School of Mathematics, Georgia Institute of Technology,
Atlanta, Georgia, 30332 U.S.A.}
\email{jchen646@math.gatech.edu}

\author{Papri Dey}
\address{Simons Institute for the Theory of Computing, Berkeley, California, 94720 U.S.A.}
\email{papridey@berkeley.edu}


\subjclass[2010]{11C20, 15A15, 65F40, 15B99}
\keywords{determinantal representations, linear matrix inequalities, plane curves, hyperbolic polynomials}

\begin{abstract}
We introduce the \texttt{DeterminantalRepresentations} package for \textit{Macaulay2},
which computes definite symmetric determinantal representations of real
polynomials. We focus on quadrics and plane curves of low degree (i.e. cubics
and quartics). Our algorithms are geared towards speed and robustness, employing 
linear algebra and numerical algebraic geometry, without genericity assumptions on
the polynomials.
\end{abstract}

\maketitle

\section{Introduction}
The problem of representing a polynomial as the determinant of a linear matrix pencil is classical, cf. \cite{B,BK,Dic,D,HV}. A polynomial $f \in \R[x_1, \ldots, x_n]$ of degree $d$ (not necessarily homogeneous) is called \textit{determinantal} if $f$ is the determinant of a matrix with linear entries -- i.e. there exist matrices $A_0, \ldots, A_n \in \R^{d\times d}$ such that $f(x_1, \ldots, x_n) = \det(A_0 + x_1A_1 + \ldots + x_nA_n)$. The matrix $A_0 + x_1A_1 + \ldots + x_nA_n$ is said to give a \textit{determinantal representation} of $f$ of size $d$. If the matrices $A_i$ can be chosen to be all symmetric (resp. Hermitian), then the determinantal representation is called \textit{symmetric} (resp. Hermitian). The determinantal representation is called \textit{definite} if $A_0$ is definite, and \textit{monic} if $A_0 = I_d$ is the identity matrix. 

Computing definite symmetric (resp. Hermitian) determinantal representations of a polynomial is known as the determinantal representation problem in convex algebraic geometry \cite{BPR}. It has generated interest to the optimization community due to its connection with the problem of determining (definite) linear matrix inequality (LMI) representable sets (\cite{HV,V}). The problem of characterizing the LMI-representable subsets of $\R^n$ (i.e. spectrahedra) can be answered by characterizing determinantal polynomials, which leads to the generalized Lax conjecture \cite{LPR}.

Throughout we focus mainly on homogeneous polynomials, typically in $3$ variables, corresponding to projective plane curves (though internally via de-homogenization, it suffices to compute determinantal representations for bivariate polynomials). By a celebrated theorem of Helton-Vinnikov (\cite{HV, LPR}), all hyperbolic polynomials in $3$ variables admit symmetric determinantal representations. When $n \ge 4$, a general homogeneous polynomial of degree $d$ in $n$ variables does not admit any determinantal representation of size $d$ (except for $(n, d) = (4, 3)$). We abbreviate monic symmetric (resp. Hermitian) determinantal representation to MSDR (resp. MHDR).

\section{Quadratic determinantal polynomials}
For a quadratic polynomial $f(\x)=\x^{T}A\x+b^{T}\x+1 \in \R[\x]$ in $n$ (= any number of) variables, one can give a necessary and sufficient condition for a MSDR (resp. MHDR) to exist, via an associated matrix $W := A-\frac{1}{4}bb^{T}$; namely one (or both) of the following conditions must hold:

\begin{enumerate}
\item $W$ is negative semidefinite with $\rank W \leq 3$
\item $A$ is negative semidefinite.
\end{enumerate}

\noindent
One can explicitly find a MSDR (resp. MHDR) of size $2$ in case (1), and one of size at most $n+1$ in case (2), cf. (\cite{papriquadratic}, Theorem 3.4) for an algorithm. This is implemented as follows:

\vspace{0.05cm}

\begin{verbatim}
Macaulay2, version 1.13
i1 : needsPackage "DeterminantalRepresentations"
i2 : R = CC[x_1..x_4]
i3 : f = -25*x_1^2 + 254*x_1*x_2 + 243*x_2^2 + 234*x_1*x_3 + 494*x_2*x_3 + 247*x_3^2 + 
     198*x_1*x_4 + 378*x_2*x_4 + 378*x_3*x_4 + 143*x_4^2 + 18*x_1 + 32*x_2 + 32*x_3 + 
     24*x_4 + 1
i4 : quadraticDetRep f
o4 = | 8.06514x_1+19.0421x_2+17.2128x_3+12.4043x_4+1 10.2531x_1+1.93541x_2+2.74393x_3+... |
     | 10.2531x_1+1.93541x_2+2.74393x_3+.914643x_4   9.93486x_1+12.9579x_2+14.7872x_3+... |
\end{verbatim}

\section{Generalized mixed discriminant}
The relations between the coefficients of a determinantal polynomial and the entries of the coefficient matrices are captured by the generalized mixed discriminant:

\begin{definition} \label {generalizedmixdef}
Let $\{A^{(1)}, \ldots, A^{(n)}\} = \{(a_{ij}^{(1)}), \ldots, (a_{ij}^{(n)})\}$ be a set of $n \times n$ matrices of size $n$. The generalized mixed discriminant of a tuple of matrices $(\underbrace{A^{(1)}, \dots , A^{(1)}}_{k_{1}}, \underbrace{A^{(2)}, \dots, A^{(2)}}_{k_{2}},\dots,\underbrace{A^{(n)},\dots,A^{(n)}}_{k_{n}})$ is defined as
\begin{equation} \nonumber
\widehat{D}(\underbrace{A^{(1)}, \dots , A^{(1)}}_{k_{1}}, \underbrace{A^{(2)}, \dots, A^{(2)}}_{k_{2}},\dots,\underbrace{A^{(n)},\dots,A^{(n)}}_{k_{n}}) :=
\sum_{\alpha \in S[m]} \sum_{\sigma \in \widetilde{S}}
   \det \begin{bmatrix} a_{\alpha_{1}\alpha_{1}}^{(\sigma(1))} & \dots & a_{\alpha_{1}\alpha_{k}}^{(\sigma(1))} \\
     \vdots & & \\
     a_{\alpha_{k}\alpha_{1}}^{(\sigma(k))} & \dots & a_{\alpha_{k}\alpha_{k}}^{(\sigma(k))} \end{bmatrix}
\end{equation} where $m$ is the number of distinct matrices, $S[m]$ is the set of order-preserving $m$-cycles in $S_n$ (i.e. $\alpha = (\alpha_{1}, \dots, \alpha_{m}) \in S[m] \Rightarrow \alpha_{1} < \alpha_{2}< \dots < \alpha_{m}$), and 
$\widetilde{S}$ is the set of all distinct permutations of $\{\underbrace{1, \dots, 1}_{k_{1}},\dots,\underbrace{n, \dots, n}_{k_{n}}\}$.
\end{definition}

\begin{theorem} \label{themgmd} (cf. \cite{paprimulti}, Theorem 2.5)
If $f = \det(I_d + \sum x_iA_i)$ is determinantal of degree $d$, then the coefficients of $f$ are given by the generalized mixed discriminants of the matrices $A_{i}$: namely the coefficient of $x_{1}^{k_{1}} \dots x_{n}^{k_{n}}$ in $f$ is equal to $\widehat{D}(\underbrace{A_{1}, \dots , A_{1}}_{k_{1}}, \underbrace{A_{2}, \dots, A_{2}}_{k_{2}},\dots,\underbrace{A_{n},\dots,A_{n}}_{k_{n}}).$
\end{theorem}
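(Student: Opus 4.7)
The plan is to reduce the statement to the classical principal minor expansion of $\det(I + M)$ and then identify coefficients combinatorially. First, I would recall the identity $\det(I_d + M) = \sum_{k=0}^{d} E_k(M)$, where $E_k(M) := \sum_{|\alpha|=k} \det(M_\alpha)$ is the sum of the $k\times k$ principal minors of $M$ indexed by $k$-subsets $\alpha \subset \{1,\dots,d\}$ (equivalently, the $k$-th elementary symmetric function of the eigenvalues of $M$). Applied to $M = \sum_{i=1}^n x_i A_i$, this rewrites $f$ as a sum of determinants of principal submatrices of linear pencils.

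Next, for each fixed $\alpha = \{\alpha_1 < \cdots < \alpha_k\}$, I would use multilinearity of the determinant on rows to expand
\[
\det\bigl(\textstyle\sum_{i} x_i A_i\bigr)_{\!\alpha} \;=\; \sum_{\tau \colon \{1,\dots,k\} \to \{1,\dots,n\}} x_{\tau(1)} \cdots x_{\tau(k)} \cdot \det\bigl[(A_{\tau(j)})_{\alpha_j\alpha_l}\bigr]_{j,l=1}^{k},
\]
where row $j$ of the inner mixed-determinant matrix is drawn from $A_{\tau(j)}$ at the principal positions indexed by $\alpha$. To extract the coefficient of $x_1^{k_1}\cdots x_n^{k_n}$, I would restrict to $k := k_1 + \cdots + k_n$ and to those $\tau$ taking the value $i$ exactly $k_i$ times, which is precisely the set $\widetilde{S}$ of distinct permutations of $\{1^{k_1},\dots,n^{k_n}\}$ from Definition~\ref{generalizedmixdef}. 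Summing the resulting mixed determinants over all $k$-subsets $\alpha$ of $\{1,\dots,d\}$ then reproduces $\widehat{D}(\underbrace{A_1, \dots, A_1}_{k_1}, \dots, \underbrace{A_n, \dots, A_n}_{k_n})$.

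The main obstacle is bookkeeping rather than any deep algebraic fact: the only inputs are row-multilinearity of the determinant and the principal minor expansion of $\det(I+M)$. One must carefully match the indexing convention in Definition~\ref{generalizedmixdef} (in particular, that the outer index set ranges over ordered $k$-subsets of $\{1,\dots,d\}$ and that each $\sigma \in \widetilde{S}$ selects a matrix for each row) with the term produced by the multilinear expansion, and verify that summing over $\widetilde{S}$ accounts for every ordered sequence of matrix indices of the prescribed multiplicity exactly once.
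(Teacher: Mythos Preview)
Your argument is correct and is the standard proof: expand $\det(I_d+M)$ as a sum of principal minors, then use row-multilinearity of each minor to isolate the monomial $x_1^{k_1}\cdots x_n^{k_n}$, and match the resulting double sum to the definition of $\widehat{D}$. The paper itself does not supply a proof of this theorem; it simply cites \cite{paprimulti}, Theorem~2.5, so there is no in-paper argument to compare against. Your approach is precisely the one underlying that reference.

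One small caution on the bookkeeping you already flagged: the indexing in Definition~\ref{generalizedmixdef} as printed is slightly inconsistent (the outer sum is said to range over $S[m]$ with $m$ the number of \emph{distinct} matrices, yet the displayed determinant is $k\times k$ with $k=k_1+\cdots+k_n$). For the identity to hold one must read $\alpha$ as ranging over increasing $k$-tuples from $\{1,\dots,d\}$, exactly as you do; your multilinear expansion then produces, for each such $\alpha$ and each $\sigma\in\widetilde{S}$, the mixed determinant with row $j$ taken from $A_{\sigma(j)}$, and every relevant $\tau$ appears exactly once because $\widetilde{S}$ is by definition the set of distinct rearrangements of the multiset $\{1^{k_1},\dots,n^{k_n}\}$. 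With that reading, nothing further is needed.
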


\noindent
Generalized mixed discriminants can be computed with the function {\tt generalizedMixedDiscriminant}:

\begin{verbatim}
i5 : n = 3; S = QQ[a_(1,1)..a_(n,n),b_(1,1)..b_(n,n),c_(1,1)..c_(n,n)]; R = S[x_1..x_n]
i8 : A = sub(genericMatrix(S, n, n), R)
i9 : B = sub(genericMatrix(S, b_(1,1), n, n), R)
i10 : C = sub(genericMatrix(S, c_(1,1), n, n), R)
i11 : G = generalizedMixedDiscriminant({A, B, C})
o11 = a   b   c    - a   b   c    - a   b   c    + a   b   c    - a   b   c    + ...
       3,3 2,2 1,1    3,2 2,3 1,1    2,3 3,2 1,1    2,2 3,3 1,1    3,3 2,1 1,2
i12 : P = det(id_(R^n) + x_1*A + x_2*B + x_3*C);
i13 : G == (last coefficients(P, Monomials => {x_1*x_2*x_3}))_(0,0)
o13 = true
\end{verbatim}

\section{Higher degree determinantal polynomials}
We now consider polynomials of degree $> 2$. Let $f \in \R[x_0, x_1, x_2]_d$ be a homogeneous polynomial of degree $d$ in $3$ variables. The homogenization of a MSDR for $f \Big|_{x_0=1}$ gives a MSDR for $f$, so we reduce to the case of $2$ variables. Given a (not necessarily homogeneous) polynomial $f \in \R[x_1, x_2]$ of total degree $d$, we seek to compute $A_1, A_2 \in \operatorname{Sym}_{d \times d}(\R)$ such that $f = \det(I_d + x_1 A_1 + x_2 A_2)$. It is easy to obtain the eigenvalues of the unknown matrices $A_i$: for instance, the eigenvalues of $A_1$ are the negative reciprocals of the roots of the univariate polynomial $f \Big|_{x_2 = 0} = \det(I_d + x_1A_1)$, and similarly for $A_2$ (note that this polynomial has nonzero roots). 

Now, if the $A_i$ are symmetric, then by the spectral theorem there are orthogonal matrices $V_i$ such that $V_i^TA_iV_i = D_i$ is diagonal, with entries equal to the known eigenvalues of $A_i$, for $i = 1, 2$. Setting $V := V_2^TV_1$ (which is orthogonal), one has $f = \det(I_d + x_1 D_1 + x_2V^TD_2V) = \det(I_d + x_1 VD_1V^T + x_2D_2)$. 

With this, one can obtain the diagonal entries of $V^TD_2V$: it follows from \Cref{themgmd} that these can be obtained by solving a linear system involving only $D_1$ (i.e. eigenvalues of $A_1$ -- note the subscript) as well as coefficients of monomials in $f$ which are linear in $x_2$. We note that the linear system giving rise to diagonal entries of $V^TD_2V$ has unique solutions iff $A_1$ has distinct eigenvalues -- if the solutions are nonunique, then (as we will see) we must choose a solution which is majorized by (the diagonal entries of) $D_2$. By a symmetrical argument, we may henceforth assume that diagonal entries of $V^TD_2V$ and $VD_1V^T$ are known.

It thus suffices to compute the symmetric matrix $V^TD_2V$: we propose two methods to do so. The first, brute-force, method is to simply find the $d \choose 2$ off-diagonal entries of $V^TD_2V$ by solving a square polynomial system arising from \Cref{themgmd}. Although symbolic methods are slow, a numerical method for solving the system is implemented in the method \texttt{trivariateDetRep}, with the (default) option \texttt{Strategy => DirectSystem}:

\begin{verbatim}
i14 : R = RR[x,y,z]
i15 : f = det(x*id_(R^4) + y*diagonalMatrix {4,3,2,1_R} + z*randomIntegerSymmetric(4,R))
i16 : sols = trivariateDetRep f;
Solving 6 x 6 polynomial system ...
i17 : #sols
o17 = 64
i18 : all(sols, M -> clean(1e-7, f - det M) == 0)
o18 = true
\end{verbatim}

\noindent
This method works well up to degree $4$, but for higher degrees, even numerical methods take too long to finish.

An alternative, more theoretical, method for cubics is to note that since $D_2$ is known, finding $V^TD_2V$ is equivalent to finding $V$, and in this case (of degree $3$), it turns out that the Hadamard square of $V$ can be determined essentially by linear algebra
(cf. \cite{papribiv}, Theorem 2.14). 


\begin{definition}
If $A, B$ are matrices of the same size, we denote their \textit{Hadamard product} by $A \odot B$, i.e. $(A \odot B)_{ij} = A_{ij}B_{ij}$. We say that a square matrix $A$ is \textit{orthostochastic} if it is the Hadamard square of an orthogonal matrix, i.e. $A = V \odot V$ for some orthogonal matrix $V$.
If $v, w \in \R^n$, we say that $v$ is \textit{majorized by} $w$ if $\sum_{j=1}^n v_j = \sum_{j=1}^n w_j$ and $\sum_{j=1}^i (\widetilde{v})_j \le \sum_{j=1}^i (\widetilde{w})_j$ for all $i = 1, \ldots, n$, where $\widetilde{v}, \widetilde{w}$ are the decreasing rearrangements of $v, w$.
\end{definition}

To elaborate: in the cubic case, given the majorization conditions mentioned above, one can set up a zero-dimensional polynomial system to find the unknown entries of $V \odot V$, which consists of one cubic equation with all other equations linear, and only involves the (known) diagonal entries of $D_1, D_2, V^TD_2V, VD_1V^T$. 

Thus to recover $V$, it suffices to determine all orthogonal matrices with Hadamard square equal to a given orthostochastic matrix. Given a $n\times n$ orthostochastic matrix $A$, there are $2^{n^2}$ possible matrices whose Hadamard square is $A$ (not all of which will be orthogonal in general though). Let $G\cong (\Z/2\Z)^n$ be the group of diagonal matrices with diagonal entries equal to $\pm 1$. Then $G \times G$ acts on the set of orthogonal matrices whose Hadamard square is $A$, via $(g_1, g_2) \cdot O := g_1Og_2$. The method {\tt orthogonalFromOrthostochastic} computes all such orthogonal matrices, modulo the action of $G \times G$. We note the following:

\begin{proposition}
For a general orthostochastic matrix $A$, there is exactly one $G \times G$-orbit of orthogonal matrices with Hadamard square equal to $A$.
\end{proposition}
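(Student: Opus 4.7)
The plan is to lift the question to $O(n)$: it suffices to show that for $V$ in a Zariski dense open subset of $O(n)$, every orthogonal $W$ with $W \odot W = V \odot V$ lies in the $G \times G$-orbit of $V$. The genericity condition I will impose on $V$ turns out to be $G \times G$-invariant, so the corresponding condition on the orthostochastic matrix $A := V \odot V$ also defines a dense open subset.

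Given such $V$ and $W$, the equality $V_{ij}^2 = W_{ij}^2$ forces $W = S \odot V$ for a unique sign matrix $S \in \{\pm 1\}^{n \times n}$. The $G \times G$-orbit of $V$ corresponds precisely to the rank-one sign patterns $S_{ij} = s_i t_j$ with $s,t \in \{\pm 1\}^n$, because $(\diag(s) V \diag(t))_{ij} = s_i t_j V_{ij}$. Thus the task reduces to showing $S$ has rank one as a sign matrix. Orthogonality of $V$ and $W$ give, for each pair $i \ne j$,
\[ \sum_{k=1}^n V_{ik} V_{jk} = 0 \quad \text{and} \quad \sum_{k=1}^n S_{ik} S_{jk} V_{ik} V_{jk} = 0. \]
Setting $I^\pm := \{ k : S_{ik} S_{jk} = \pm 1 \}$ and taking the half-sum and half-difference of these two identities yields $\sum_{k \in I^+} V_{ik} V_{jk} = \sum_{k \in I^-} V_{ik} V_{jk} = 0$.

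The key genericity claim is that for $V$ in a Zariski dense open $U \subseteq O(n)$, the only subsets $I \subseteq \{1,\ldots,n\}$ for which $\sum_{k \in I} V_{ik} V_{jk}$ vanishes are $\emptyset$ and $\{1,\ldots,n\}$. Each such partial sum (for fixed $i \ne j$ and fixed proper nonempty $I$) cuts out a Zariski-closed subset of $O(n)$; since this is a finite collection of conditions, their complement $U$ is dense provided no single partial sum is identically zero on $O(n)$, which is clear by exhibiting one orthogonal matrix — e.g., any perturbation of the identity within $O(n)$ — whose entries yield nonzero values for each such sum. For $V \in U$, the identities above force $I^+$ or $I^-$ to be empty for every pair $(i,j)$, meaning any two rows of $S$ are either equal or opposite. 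Hence all rows of $S$ are $\pm$ copies of one fixed row, so $S = s t^T$ and $W = \diag(s) V \diag(t)$ as required.

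Finally, the bad locus $O(n) \setminus U$ is $G \times G$-invariant: under $V \mapsto \diag(s) V \diag(t)$, the product $V_{ik} V_{jk}$ is scaled by $s_i s_j$, which does not affect vanishing of any partial sum. Therefore the image of the bad locus under $V \mapsto V \odot V$ is a proper semi-algebraic subset of the orthostochastic variety, whose Zariski closure is proper, yielding the desired dense open subset of orthostochastic matrices. The main obstacle is the genericity step, but this reduces to the routine verification that no proper nontrivial subsum $\sum_{k \in I} V_{ik} V_{jk}$ is identically zero on $O(n)$.
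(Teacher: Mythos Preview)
Your argument is correct and rests on the same underlying observation as the paper's: the existence of a second orbit forces a nontrivial \emph{partial} inner product $\sum_{k\in I} V_{ik}V_{jk}$ to vanish, which is a proper Zariski-closed condition on $O(n)$. The difference is mainly organizational. The paper first normalizes via $G\times G$ so that the first row and first column are nonnegative, hence shared by any two normalized representatives $U,V$; then for each remaining column $j$ it observes that $u_1^T(u_j-v_j)=0$ unwinds (writing $v_j=\sigma\odot u_j$) to a partial sum $\sum_{i:\sigma_i=-1} u_{i1}u_{ij}=0$, which generically forces $\sigma=(1,\dots,1)$. Thus the paper only ever uses orthogonality against the single fixed first column. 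You instead work symmetrically with the full sign matrix $S$ and all pairs of rows, deducing directly that $S$ is rank one. Your route is a bit longer but makes the genericity hypothesis completely explicit and avoids the normalization step; the paper's route is more economical but terser about why the stated condition is nontrivial. One small point: your verification that no proper partial sum vanishes identically on $O(n)$ is indeed routine---for $\emptyset\ne I\subsetneq\{1,\dots,n\}$ pick $k_1\in I$, $k_2\notin I$ and extend $v=(e_{k_1}+e_{k_2})/\sqrt 2$, $w=(e_{k_1}-e_{k_2})/\sqrt 2$ to an orthogonal matrix---but it would strengthen the write-up to say this rather than invoke an unspecified ``perturbation of the identity.''
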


\begin{proof}
Since the action of $G \times G$ amounts to performing sign changes in each row and column, each $G \times G$ orbit contains a unique element with nonnegative entries in the first row and first column. If there were distinct $G \times G$ orbits for $A$, then there would be orthogonal matrices $U, V$ with identical first column $u_{(i,1)} = v_{(i,1)} = \sqrt{a_{(i,1)}}$, but $u_j \ne \pm v_j$ for some column $j > 1$ (since $u_{(1,j)} = v_{(1,j)} = \sqrt{a_{(1,j)}} \ge 0$), which would impose a Zariski-closed condition on the entries of $U$ (namely $u_1^T(u_j - v_j) = 0$), hence does not hold for general $A$. 
\end{proof}

We illustrate this with some examples: note that when using floating point inputs, it may be necessary to specify the value of the option {\tt Tolerance} (default value $10^{-5}$) in order to obtain useful results.

\begin{verbatim}
i19 : (A1, A2) = (randomIntegerSymmetric(3, R), randomIntegerSymmetric(3, R))
o19 = (| 12 3  18 |, | 18 16 9  |)
       | 3  12 14 |  | 16 4  11 |
       | 18 14 6  |  | 9  11 16 |
i20 : f = det(x*id_(R^3) + y*A1 + z*A2)
       3      2          2        3      2                  2         2          2        3
o20 = x  + 30x y - 241x*y  - 3918y  + 38x z + 52x*y*z + 768y z - 34x*z  + 3282y*z  - 2278z
i21 : reps = trivariateDetRep f
o21 = {| x+33.7014y+36.8578z .607983z            3.51039z            |,
       | .607983z            x+9.08918y-6.37807z -3.92965z           | 
       | 3.51039z            -3.92965z           x-12.7906y+7.52028z | 
      ------------------------------------------------------------------
      | x+33.7014y+36.8578z 1.11917z            3.2642z             |}
      | 1.11917z            x+9.08918y-6.37807z 4.02828z            |
      | 3.2642z             4.02828z            x-12.7906y+7.52028z |
i22 : all(reps, M -> clean(1e-9, f - det M) == 0)
o22 = true
i23 : g = x^3+7*x^2*y+16*x*y^2+12*y^3+3*x^2*z+22*x*y*z+32*y^2*z-45*x*z^2-65*y*z^2-175*z^3
o24 : reps = trivariateDetRep g
o24 = {| x+3y-7z 0       0       |}
       | 0       x+2y+5z 0       |
       | 0       0       x+2y+5z |

\end{verbatim}

\noindent
As seen above, the output of {\tt trivariateDetRep} is a list of matrices $M$, whose entries are linear forms, such that the input polynomial $f$ is equal to $\det M$. As the second example shows, no genericity assumptions are made on the polynomial $f$ -- that is, the eigenvalues of the coefficient matrices $A_1$ and $A_2$ need not be distinct (as opposed to the treatment in e.g. \cite{PSV}). This in turn reveals information which is typically hidden to numerical methods (such as a \texttt{numericalIrreducibleDecomposition}), e.g. that one of the component lines of the plane curve defined by $g$ above has multiplicity $2$.

\section{Additional methods}

In the course of creating this package, various functions for working with matrices were needed which (to the best of our knowledge) were not available in \textit{Macaulay2}. Thus a number of helper functions are included in this package, which may be of general interest to users beyond the scope of computing determinantal representations. These include: \texttt{hadamard} (for computing Hadamard products), \texttt{cholesky} (for computing the Cholesky decomposition of a PSD matrix), \texttt{companionMatrix} (which returns a matrix whose characteristic polynomial is any given univariate monic polynomial), \texttt{isOrthogonal, isDoublyStochastic} (for checking properties of a given matrix), \texttt{randomIntegerSymmetric, randomUnipotent, randomOrthogonal, randomPSD} (for generating various types of random matrices), and \texttt{liftRealMatrix, roundMatrix} (for lifting matrices from $\C$ to $\R$ and $\Q$).

\vspace{0.5cm}

\noindent \textsc{Acknowledgements.} Both authors gratefully acknowledge the support of ICERM, and the Fall 2018 Nonlinear Algebra program in particular, where this project began. We thank Ritvik Ramkumar for helpful comments and software testing. The second author would like to thank her Ph.D. thesis supervisor Harish K. Pillai for his helpful suggestions in this topic and Deepak Patil for implementing the notion of generalized mixed discriminant of matrices in Matlab during her Ph.D.

\end{document}